\tikzstyle{every picture}+=[remember picture]
\DeclareMathOperator{\Prob}{Prob}
\newcommand{\NP}{\ensuremath{\mathcal{NP}}}
\newcommand{\R}{\mathbf{R}}
\newcommand{\E}{\mathbb{E}}
\newcommand{\Crit}{\mathcal{C}}
\newcommand{\MTP}{\mathcal{M}}
\definecolor{truered}{rgb}{1,.0,.0}
\definecolor{trueblue}{RGB}{0,0,255}
\definecolor{truegreen}{RGB}{0,128,0}
\newcounter{mynotes}
\title{Scenario Aggregation using Binary Decision Diagrams for
  Stochastic Programs with Endogenous Uncertainty}
\titlerunning{Scenario Aggregation using BDDs}
\author{%
  Utz-Uwe Haus 
  \and 
  Carla Michini
  \and
  Marco Laumanns
}
\date{}
\institute{
  {Cray EMEA Research Lab, Cray Switzerland, Hochbergerstr. 60C, 4057 Basel
    Switzerland 
    \email{uhaus@cray.com}}
  \and
  {Wisconsin Institute for Discovery, University of Wisconsin --
    Madison, 330 North Orchard Street,
    Madison WI 53715, USA
    \email{michini@wisc.edu}}
  \and
  {IBM Research, 8803 Rueschlikon, Switzerland,
  \email{mlm@zurich.ibm.com}}}
\begin{document}
\maketitle   
\begin{abstract}
  Modeling decision-dependent scenario probabilities in stochastic
  programs is difficult and typically leads to large and highly
  non-linear MINLPs that are very difficult to solve.  In this paper,
  we develop a new approach to obtain a compact representation of the
  recourse function using a set of binary decision diagrams (BDDs)
  that encode a nested cover of the scenario set.  The resulting BDDs
  can then be used to efficiently characterize the decision-dependent
  scenario probabilities by a set of linear inequalities, which
  essentially factorizes the probability distribution and thus allows
  to reformulate the entire problem as a small mixed-integer linear
  program.  The approach is applicable to a large class of stochastic
  programs with multivariate binary scenario sets, such as stochastic
  network design, network reliability, or stochastic network
  interdiction problems.  Computational results show that the
  BDD-based scenario representation reduces the problem size, and
  hence the computation time, significant compared to previous
  approaches.

\end{abstract}

\keywords{multistage stochastic optimization, exact reformulation,
  scenario aggregation, reliability optimization}

\section{Introduction}
\label{sec:intro}

When modeling a stochastic optimization problem as a stochastic
program, one usually assumes that the scenario probabilities are
fixed and given input the problem. In this setup, decisions can
influence the outcome in each scenario but not their probability of
realizing. 
This standard way of reflecting the effect of uncertainty in the model
has been referred to as \emph{exogenous uncertainty}, while the term 
\emph{endogenous uncertainty} has been introduced to describe
situations where decisions can actually influence the stochastic
process itself and not only its outcomes \cite{jonsbraten:98}.

While endogenous uncertainty is straightforward to express in the
framework of Markov decision processes, its use in stochastic
programming has remained very rare, because the resulting models
appear very cumbersome to formulate and notoriously hard to solve.  
\cite{goel-grossmann:04,goel-grossmann:06} distinguish two types of
endogenous uncertainty, according to whether the decisions can
influence the temporal unfolding of events or the probability
distribution. Models of the first type are usually formulated by
enumerating the various scenario trees that correspond to the
different possible sequencing of events and couple them via
disjunctive formulations of non-anticipativity constraints (see e.g. \cite{colvin-maravelias:08}. 
Solution approaches include implicit enumeration schemes \cite{jonsbraten:98},
branch-and-bound coupled with Lagrange duality~\cite{goel-grossmann:06}, 
branch-and-cut~\cite{colvin-maravelias:10}, Lagrange relaxation of the
non-anticipativity constraints \cite{gupta-grossmann:11},
decomposition techniques \cite{gupta-grossmann:14} or
heuristics \cite{held-woodruff:05}.

In this paper we focus on problems of the second type of endogenous
uncertainty, where decisions can influence the probability
distribution of the scenarios. 
The straightforward way to model this involves products or other
non-linear expressions of decision variables thus leads to highly
non-linear models that are very hard to solve
\cite{peeta-salman-gunnec-viswanath:10,kawas-laumanns-pratsini:13}.
\cite{ahmed:00} as well as \cite{flach-poggi:10} have used
convexification techniques to deal with these polynomials, while
\cite{peeta-salman-gunnec-viswanath:10} have relied on linear
approximations.
\cite{schichl-sellmann:15} solve the non-linear stochastic program
directly with a new constraint programming approach using new type of
extreme resource constraint in combination with an efficient
propagation algorithm. 

In order to deal with the usually exponentially large scenario sets,
most approaches had to resort to scenario sampling and work with a
representative subset of scenarios. 
Recently \cite{prestwich-laumanns-kawas:13} and \cite{laumanns:14} 
proposed an exact scenario bundling approach, where scenarios that are
equivalent with respect to the recourse function are merged. 
In \cite{schichl-sellmann:15} scenarios are grouped according to the
shortest surviving paths in the underlying network.   

An important class of stochastic optimization problems that typically  
include endogenous uncertainty comes from the related areas of
stochastic network design, network reliability, and network
interdiction. Common to these problems are an underlying graph whose
edges or nodes are subject to random failures. The decisions to be
made are in order to change their failure (or survival) probabilities
of edges or nodes such that the resulting failure process has some
favorable properties, such as connectivity, shortest path lengths or
other performance indicators related to the function or process which
the graph expresses. Such problems can naturally be formulated as
two-stage stochastic programs, where those tactical or ``design''
decisions constitute the first stage, the failure process is
represented by a (usually exponentially) large set of scenarios, and
the resulting performance of the network in each failure scenario is
captured by a recourse function (which might involve auxiliary
second-stage decisions, e.g., for determining flows or shortest
paths).

We consider a general setting where scenarios can be formally
described as random binary vectors. This holds in particular for
stochastic network design and network reliability problems described
above. In many such cases, the recourse function is monotonous with
respect to the scenario. Take for example the
shortest path length between a given pair of nodes in a graph: if
scenarios indicate a set of surviving edges after some disaster, then
the shortest path length can never increase when fewer edges
 survive. 
Our main idea is exploit this structure for scenario aggregation, and
our main contribution is a new method for scenario aggregation by
means of binary decision diagrams. 
The approach not only enables a substantial reduction of the
(initially exponentially large) scenario set without any accuracy
loss, but also to efficiently characterize the decision-dependent
scenario probabilities by a set of linear inequalities. This
essentially factorizes the probability distribution and thus allows to
reformulate the entire problem as a small mixed-integer linear
program.  

Our motivation and running example will be the shortest path problem
hinted at above:
\begin{example}\label{ex:pre-disaster-planning}
  Let $G=(V,E)$ be a graph whose edges have some (independent)
  failure probabilities $p_e\in[0,1]$ ($e\in E$). After a disastrous
  event some edges will have failed, and each failure scenario is
  described by a set of surviving edges $\xi\subseteq E$. We are
  interested answering the following two questions:
  \begin{enumerate}
  \item What is the expected value of the shortest path
  lengths between a set of distinguished nodes? 
  \item If we can take some actions to modify the failure
    probabilities $p_e$, how should this be done optimally in order to
    to minimize the expected shortest path length?
  \end{enumerate}
\end{example}
If the graph is a road network, and edges fail due to an earthquake,
this is the setting of~\cite{peeta-salman-gunnec-viswanath:10}. The
set of all failure scenarios can be totally ordered by comparing the
lengths of the shortest path between two (fixed) nodes: If in 
failure scenario there exists an $st$-path of length
at most $\alpha$, then in all scenarios where only a subset of these
edges fails, this path still exists. Scenarios with the same shortest
path length will be considered equal in the total order. Of course,
depending on the application, 
many other scenario orderings are conceivable, e.g., according to longest
$st$-path, number of disjoint $st$-paths, or size of the largest
connected component of the graph. 

The successive shortest path problem is closely related to the classic
$st$-reliability problem introduced by 
\cite{moore-shannon:56a,moore-shannon:56b} and 
\cite{birnbaum-esary-saunders:61}
and has been studied by many authors in numerous variants since then. Here the
network is a system of circuits with edges corresponding to components
which have some probability of failing. The system as a whole operates
if there exists some $st$-path, and the probability of having such
a path is called $st$-reliability.
This fundamental problem and its various extensions have a vast number
of applications in reliability engineering~\cite{billinton-allan:92}.

The question also naturally arises in the setting of interdiction
problems, where typically one considers decisions to be actions of an
attacker to weaken the network structure (to increase failure
probability of edges, reduce capacity, etc.) in order to hamper the
network's operational capability. For a survey on typical network
interdiction problems and corresponding modeling and solution
approaches see~\cite{cormican-morton-wood:98}. 

The approach we develop in this paper is not limited to combinatorial stochastic optimization
problems. It can also be used in computing the expected objective
function value for linear programs with varying right-hand-side
coefficients, where each coefficient independently attains one of two values
with a given probability.
Consider a maximization problem with less than or equal constraints, and assume that: $(i)$ in the nominal problem all right-hand-side coefficients are at the larger of the two values; $(ii)$ the
scenarios are given by sets of rows where the coefficient takes the
smaller value.
Then the objective function is monotonously decreasing
when taking supersets among the scenarios. In a similar way, irreducible inconsistent linear systems and maximal consistent subsets of LPs~\cite{parker-ryan:96,amaldi-pfetsch-trotter:03} can be studied.

\section{Problem Setting}
\label{sec:setting}

We consider a two-stage stochastic optimization problem
\begin{equation}
  \label{eq:2-stage-stoch}
  \begin{aligned}
    \min &\,\E_{\xi|x}[f(\xi)]\\
    Cx&\leq d\\ 
    x_e&\in\{0,1\}&&e\in E\\
    \xi&=(\xi_e)_{e\in E}\in\{0,1\}^{|E|}
  \end{aligned}
\end{equation}
where we minimize the expected value (over all realized failure
scenarios) of the recourse function $f(\xi):2^{E}\to\R$ conditioned on
first-level
decisions $x_e$, where $e\in E$ has an associated elementary failure
event $\xi_e$ whose probability may be 
influenced by decision $x_e$. The probability distribution of the
scenario distribution thus changes 
depending on the first-level decisions. The first-level decisions are
to be taken subject to some set of linear constraints, e.g., a budget
restriction $\sum_{e\in E}x_e\leq \beta$. Typically the evaluation of
the recourse function $f(\xi)$  will itself amount to solving an optimization
problem.  We will interpret scenarios
as sets of `positive events', e.g., the sets of surviving edges in a
network after some disaster.

In order to make our assumption on the problem structure more precise,
we define the following properties and objects related to the recourse
function. 

\begin{definition}[aggregable recourse function]
  The recourse function $f$ of a 2-stage stochastic optimization
  problem is called 
  \emph{aggregable} if
  \begin{enumerate}
  \item $f$ does not depend on the first-stage decision $x$,
  \item $f$ is counter-monotone with taking subsets of scenarios,
    \begin{equation}
      \xi_1\subseteq \xi_2\Rightarrow f(\xi_1)\geq f(\xi_2)\label{eq:f-countermonotone}
    \end{equation}
    for all $\xi_1,\xi_2\in 2^{E}$,
  \item the probabilities of events $e\in E$ are independent.
  \end{enumerate}
  
  We denote the \emph{critical values} of $f$, i.e.~the different values $f$
  takes, by $\Crit(f)=\{\alpha\::\:\alpha=f(\xi),\xi\in 2^{E}\}$.
  Moreover we denote by
  $\mathcal{T}\MTP_\alpha(f) = \{ \bar{\xi} \in 2^E : f(\xi) > \alpha,\; \xi = 1^E - \bar{\xi}\}$
  the sets of edge failures that forces $f$ to take values strictly greater than $\alpha$.
  Finally we define the set of minimal survivable scenarios for each critical value by
  $\MTP_\alpha(f)=\{\xi\in 2^{E}\::\:f(\xi)=\alpha, \forall
  \xi'\subset\xi: f(\xi')>f(\xi)\}$, with $\alpha \in \Crit(f)$.
\end{definition}



The elements of $\MTP_{\alpha}(f)$ are the transversal of the clutter given by the minimal members of $\mathcal{T}\MTP_\alpha(f)$.

\addtocounter{example}{-1}
\begin{example}[continued]
  In our running example, computation of $f$ amounts to computing a
  shortest path length $f_{\text{SP}}$ in a graph whose edge set
  depends on the scenario: the edges that failed are missing in the
  scenario compared to the initial graph $G$. Clearly, shortest path
  length is counter-monotone with removing supersets of edges,
  i.e., considering subsets of the surviving edges of some other
  scenario. Another way of looking at this is to say that the sublevel
  sets $L^-_{\alpha}(f)=\{\xi\::\: f(\xi)\leq \alpha\}$ of $f$ are
  co-monotone (wrt. inclusion) with the natural ordering of real
  numbers $\alpha_i<\alpha_j$.  Each set $\MTP_\alpha(f)$ consists of
  the simple paths of length $\alpha$ in the graph.
\end{example}

The set $\Crit(f)$ allows us to rewrite the objective function as
follows:
$$\E_{\xi|x}[f(\xi)]=\sum_{\alpha\in\Crit(f)}\alpha\Prob[f(\xi)=\alpha|x].$$
We are thus interested in computing the probabilities
$\Prob[f(\xi)=\alpha]$, first without taking into account possible
decision variables $x_e$, and then integrating them.

\section{Aggregation of Nested Scenario Covers}
\label{sec:scenario-aggregation}

In this section, we develop our approach to provide an effective
scenario aggregation by using nested scenario covers. We first
describe how scenario sets can be encoded as BDDs in such a way that
these sets represent the sublevel sets of the recourse function. We
then proceed to show how scenario probabilities can be represented as
a flow of probability on the DAGs represented by the BDDs. Finally, we
describe how those probability flows can be shaped using the binary
first-stage decision variables via linear inequalities and a derive
the resulting MIP formulation.

\subsection{Scenario Aggregation using BDDs}
\label{sec:scenario-bundles}

We will from now on consider the set $\Crit(f)$ as an ordered set
$(\alpha_0,\dots,\alpha_N)$ with the natural ordering $<$ of the real
numbers. This induces an ordering of the set $\MTP(f)$ as
$(\MTP_{\alpha_0}(f),\dots,\MTP_{\alpha_N}(f))$. 

Each $\MTP_\alpha(f)$ induces a monotone Boolean function
$\Phi^\leq_\alpha$ on the scenarios whose minimal true points are the
members of $\MTP_\alpha(f)$ by ``$\Phi^\leq_\alpha(\xi)=1$ if and only
if $f(\xi)\leq\alpha$.'' It is well-defined because
of~\eqref{eq:f-countermonotone}. Note that $\Phi^\leq_\alpha(\xi)$
describes the support of the cumulative distribution function
$\Prob[f(\xi)\leq\alpha]$. We refer to~\cite{crama-hammer-book} for
more details on Boolean functions. In the reliability analysis setting
the function $\Phi^\leq_\alpha$ is called system state function, and
we can assume that the system is a coherent binary system. Then
$\MTP_\alpha(f)$ are the path sets of the system and
$\Prob[\Phi^\leq_\alpha(\xi)=1]$ is the reliability, see~\cite{ball-provan:88}.

We propose to encode each function $\Phi^\leq_\alpha$ for
$\alpha\in\Crit(f)$ in the form of a (reduced, ordered) binary
decision diagram (BDD), see~\cite{lee:59,bryant:86}. Intuitively, a
BDD can be understood as a directed acyclic graph with a single source
and a single sink, in which each source-sink path encodes one or more
feasible points of a Boolean function. The graph is layered, with the
layers indexed by the variables of the function. The key property is
that every node has at most 2 children, and for each sub-dag that
includes a sink there are no isomorphic copies in the BDD. This can
make them much more compact than conventional decision trees encoding
the same feasible points. Although BDDs can be of exponential size
compared to the function they encode, for various classes of functions
one can find compact BDD encodings.  Note that finding a minimal size
BDD is $\NP$-hard, even for monotone Boolean functions
\cite{takenaga-yajima:00}, and not every BDD construction algorithm
will be output-polynomial.  Despite all of these caveats, BDDs have
been highly successful in many applications,
see~\cite{meinel-theobald:98} for an overview. Furthermore, good
software for BDD handling exists, which provides various heuristic BDD
size minimization strategies, e.g, \texttt{cudd}
of~\cite{somenzi:cudd-2.5.0}.

Recently, it was shown that BDDs encoding independence systems can
always be constructed in output-polynomial time, if equivalence of
minors in the associated circuit system can be checked efficiently by
a top-down compilation rule of~\cite{haus-michini:14}. We will use
this to prove Lemma~\ref{lem:something-with-guaranteed-size}.

\begin{definition}[BDD]
  Let $E=(e_1,\dots,e_{|E|})$ be a finite linearly ordered set. A
  \emph{binary decision diagram} (BDD)
  $B=(E,V,A\subseteq V\times
  V,\epsilon:V\to \{1,\dots,|E|\},l((u,v)):A\to\{0,1\})$ is either degenerate,
  $B=(E,\{\bot\},\emptyset,\epsilon,l)$ or
  $B=(E,\{\top\},\emptyset,\epsilon,l)$, or consists of a directed
  acyclic graph on node set $V$ with exactly one root $u^*$, two
  leaves $\top,\bot$, arc set $A$, arc label function $l:A\to\{0,1\}$
  and a layering function
  $\epsilon:V\to \{1,\dots,|E|\}$.
  Unless $B$ is
  degenerate, it must satisfy the following conditions:
  \begin{itemize}
  \item
    $B$ is a layered digraph, i.e. its node set partitions into layers
    $L_i = \{u \in V \::\: \epsilon(u)=i\}$, $i = 1,\dots,|E|$ and 
    $L_{|E|+1}=\{\top\}$, such that $|L_{1}|=1$ (the root layer) and
    $L_{|E|+1}$ is the terminal layer.
  \item Arcs only extend to layers with higher index, i.e.
    $\forall (u,v)\in A, \; \epsilon(u) < \epsilon(v)$.
  \item Every node $u\in V\setminus\{\bot,\top\}$ is the tail of exactly two differently
    labeled arcs,
    i.e. $\forall u\in V\setminus\{\bot,\top\}\exists v_0,v_1\in V:v_0\neq v_1,(u,v_0)\in
    A,(u,v_1)\in A,l((u,v_0))\neq l((u,v_1))$.
  \item For any two distinct nodes $u,v$ of $V$, the sub-BDDs rooted at
    $u$ and $v$ are not isomorphic, i.e. $B_u\not\cong B_v$.\\
    Here $B_u$ denotes the sub-BDD of $B$ defined by
    the sub-dag of $(V,A)$ rooted at $u$, with node and arc label
    functions obtained by restriction of $\epsilon$ and $l$ to $V(B_u)$ and $A(B_u)$, respectively. BDD-isomorphism $\cong$ is defined
    as isomorphism of directed graphs with identical variable set and
    identically evaluating functions $\epsilon$ and $l$.
  \end{itemize}
  For each node $u\in V$, the outgoing arc labeled by $1$ is called
  the \emph{\textsc{True}-arc}, and the outgoing arc labeled by $0$ is
  called the \emph{\textsc{False}-arc}. 
\end{definition}
Note that BDDs according to this definition are called \emph{reduced,
  ordered BDD} in the classical
literature~\cite{lee:59,bryant:86,meinel-theobald:98}. (In drawing
BDDs one often suppresses the $\bot$-node and all arcs
$\delta^{\text{in}}(\bot)$, since these can be easily reconstructed.)

For $i = 1,\dots,|E|$, the \emph{layer width} $w_i$ of layer $L_i$ is defined as $w_i=|L_i|$. The \emph{BDD-width} is then
$w = \max_{i = 1,\dots,|E|} w_i$. The \emph{total size} of a BDD is $|V|+1=\sum_{i = 1}^{|E|+1} w_i$.

Each BDD $B$ represents a Boolean function: 
\begin{equation}
  \label{eq:bdd-interpretation}
  \Phi_B(x)=\bigvee_{\substack{%
      \text{$P$ an}\\
      \text{$u^*$-$\top$ path}}}
  \left(\bigwedge_{\substack{%
        \text{$(u,v)$ edge of}\\
        \text{$P$}:l((u,v))=1}}
    x_{\epsilon(u)}
    \wedge
    \bigwedge_{\substack{
        \text{$(u,v)$ edge of}\\
        \text{$P$ :l((u,v))=0}}}
    (\lnot x_{\epsilon(u)})\right).  
\end{equation}

Furthermore, every Boolean function has a BDD representation (which is essentially unique given the ordering of $E$).

\begin{remark}\label{rem:dual-bdd}
  Let $\Phi(x)$ be a monotone Boolean function and
  $\Phi^D(x)=\lnot\Phi(\lnot x)$ its dual. If $B=(E,V,A,\epsilon,l)$
  is a BDD representing $\Phi(x)$, then $B'=(E,V,A,\epsilon,l')$ with
  $l'((u,v))=1-l((u,v))$ represents $\lnot\Phi^D(x)$, i.e. except for
  the labeling, $\Phi(x)$ and $\Phi^D$ have the same BDD.
\end{remark}

This is a direct consequence of the fact that the BDD not only encodes
all feasible points of the monotone Boolean function but also all
infeasible points (as paths from $u^*$ to $\bot$) and
formula~\eqref{eq:bdd-interpretation}.

The isomorphism between BDDs for a monotone Boolean function and its
dual function is useful if the set $\mathcal{M}_\alpha$ are given
as a list of minimal true elements: this list provides a covering-type
formulation for the dual function and the output-polynomial time top-down
BDD construction rule using matrix minors of~\cite{haus-michini:14}.

\subsection{Integrating Scenario Probabilities}

\begin{lemma}
  Given a BDD $B$ encoding a Boolean function $\Phi:2^E\to\{0,1\}$ and
  a scenario distribution such that the events $e\in E$ are
  independently distributed random variables, the value
  $\Prob[B]:=\Prob[\Phi(\xi)=1]$ can be computed in linear time.
\end{lemma}

Note that linear time here refers to linear in the size of the BDD $B$ (and $|E|$); in the proof we only have to show that we can obtain a system of equations computing the probability that is shaped like the BDD and does not have excessively large subexpressions.

\begin{proof}
  The construction is similar to the classical computation of
  probabilities in a scenario tree, with the difference being that
  recurring computations in subtrees are avoided because the BDD
  structure (`no isomorphic sub-BDDs') merges these cases.\\
  Let 
  $p_i\in [0,1]$ be the probability of event $e_i\in E$. We proceed
  by induction: If $B$ is the BDD consisting of only the
  $\top$-terminal, it encodes the entire cube $\{0,1\}^{|E|}$ and
  $\Prob[\Phi(\xi)=1]=1$. Similarly, if it consists only of the $\bot$
  terminal, in encodes the empty set, so $\Prob[\Phi(\xi)=1]=0$.\\
  Assume now that the statement has been proven for BDDs with $k-1>0$
  layers. Let $B$ be a BDD with $k$ layers. The unique root node
  $u^*$ has exactly 2 children that we'll call \texttt{True}-child and \texttt{FALSE}-child; let the event for the root layer
  be $\epsilon(u^*)=e_1$.
  \\
  Initially assume that the sub-BDDs have their roots in the layer
  $L_2$ directly following $e_1$. Then,
  denoting the probabilities computed for the sub-BDDs by $p_{\text{\textsc{True}-child}}$ and $p_{\text{\textsc{False}-child}}$,
  \begin{equation}
    \label{eq:bdd-prob-recursion/2}  
    \Prob[\Phi(\xi)=1]=p_{1}p_{\text{\textsc{True}-child}}+(1-p_{1})p_{\text{\textsc{False}-child}},    
  \end{equation}
  since scenarios contain/don't contain event $e_1$ with these
  probabilities and the probabilities of the completions have been
  computed inductively for the sub-BDDs of size at most $k-1$.

  If a sub-BDD, say the one for $\xi_1=1$, has its root in a
  layer $l>2$, then this sub-BDD encodes the set of solutions
  $(1,\underbrace{\star,\dots,\star}_{l-2},\xi')$ where $\star$ denotes that $0$
  or $1$ can be chosen arbitrarily for every 
  $(\xi')\in\{0,1\}^{|E|-l}$.
  For simplicity assume
  that only the \textsc{True}-child of $u^*$ starts at layer $l$. Then
  \begin{equation}
    \begin{aligned}
       \Prob[\Phi(\xi)=1]&=p_{1}\left(\prod_{i=2}^{l-1}(p_{i}+(1-p_{i}))\right)p_{\text{\textsc{True}-child}}+(1-p_{1})p_{\text{\textsc{False}-child}}\\
    &=p_{1}p_{\text{\textsc{True}-child}}+(1-p_{1})p_{\text{\textsc{False}-child}},      
    \end{aligned}
    \label{eq:bdd-prob-recursion/1}
  \end{equation}
  since the scenarios do not depend on events on layer
  $2,\dots,l-1$ for the choice of $\xi_1=1$.
  
\end{proof}

\subsection{Shaping the Scenario Distribution}

Equations~\eqref{eq:bdd-prob-recursion/2}
and~\eqref{eq:bdd-prob-recursion/1} give a recursive set of linear
equations (starting with $p_\top=1, p_\bot=0$ for the leafs) that
can be used to directly turn a BDD encoding $\MTP_\alpha(f)$ into a
set of linear constraints to compute $\Prob[\Phi^\leq_\alpha(\xi)=1]$
using as many variables and equations as the BDD has nodes (each node
has exactly one defining equation). We now show
how decisions influencing the probabilities $p_i$ can be incorporated
into these formulas as well.

\begin{lemma}
  Given a BDD $B$ encoding a Boolean function $\Phi:2^E\to\{0,1\}$ and
  a scenario distribution such that the events $e_i\in E$ are
  independently distributed random variables depending on
  decisions $(x_i)_{i=1,\dots,|E|}$, the value $\Prob_{\{\xi|x\}}[\Phi(\xi)=1]$ can be
  computed in linear time.  
\end{lemma}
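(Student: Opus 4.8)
The plan is to reuse the node recursion from the previous lemma, but now to treat the event probabilities as affine functions of the binary decisions and then to linearise the bilinear terms this introduces. First I would observe that, since each $x_i\in\{0,1\}$, the decision-dependent probability of event $e_i$ can be written as the affine expression $p_i = p_i^0 + (p_i^1-p_i^0)x_i$, where $p_i^0$ and $p_i^1$ are the constant probabilities of $e_i$ under $x_i=0$ and $x_i=1$ respectively. Substituting this into the recursion $P_u = p_i P_{T(u)} + (1-p_i)P_{F(u)}$ (with $\epsilon(u)=i$, and $T(u),F(u)$ the True- and False-children of $u$) yields one defining equation per node that is still shaped like the BDD, but is now bilinear: it contains the products $x_i P_{T(u)}$ and $x_i P_{F(u)}$.

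The main obstacle is precisely this bilinearity: $p_i$ is no longer a constant, so each term $p_i P_{\text{child}}$ is a product of two quantities that both vary with the decisions. The observation that resolves it is that every such product multiplies a \emph{binary} variable $x_i$ by a \emph{bounded continuous} variable, namely a probability $P_{\text{child}}\in[0,1]$. For each node I would introduce auxiliary variables $y_u^{T}=x_iP_{T(u)}$ and $y_u^{F}=x_iP_{F(u)}$ and impose the standard exact linearisation $0\le y_u^{T}\le x_i$ and $P_{T(u)}-(1-x_i)\le y_u^{T}\le P_{T(u)}$ (and analogously for $y_u^{F}$), which forces $y_u^{T}=x_iP_{T(u)}$ at every integer value of $x_i$. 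The node equation then becomes the purely linear $P_u = p_i^0 P_{T(u)} + (1-p_i^0)P_{F(u)} + (p_i^1-p_i^0)(y_u^{T}-y_u^{F})$.

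It then remains to handle skipped layers and to count. As in the previous lemma, if a child's root lies several layers below $u$, the intermediate layers telescope away because $p_j+(1-p_j)=1$ holds identically in $x_j$; hence no extra terms appear and the same recursion applies verbatim, with the base cases $P_\top=1$ and $P_\bot=0$ unchanged. Finally I would count: each node contributes one defining equation together with a constant number, $O(1)$, of auxiliary variables and linearisation inequalities, so the whole system has size linear in $|V|$ (and in $|E|$ for the affine probability definitions). Evaluating it for any fixed $x$, and hence computing $\Prob_{\{\xi|x\}}[\Phi(\xi)=1]$, is therefore linear in the BDD size, and the system is a set of linear constraints ready to be embedded into the overall MIP.
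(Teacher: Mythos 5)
Your proposal is correct, and its core is the same as the paper's: by independence of the events, the node recursion $P_u = p_i(x)P_{T(u)} + (1-p_i(x))P_{F(u)}$ from the unconditioned case carries over verbatim with the conditioned probabilities $p_i(x)$ substituted, skipped layers still telescope since $p_j(x)+(1-p_j(x))=1$ identically, and evaluating the resulting system for a fixed $x$ is linear in the size of the BDD. That is all the paper's proof contains; it explicitly postpones the issue that the resulting equations ``may not be linear'' in $x$ to the subsequent MIP section. Where you go further is in resolving the bilinearity inside the proof itself, via McCormick-style auxiliary variables $y_u^{T}=x_iP_{T(u)}$, $y_u^{F}=x_iP_{F(u)}$ with the bounds $0\le y\le x_i$ and $P_{\text{child}}-(1-x_i)\le y\le P_{\text{child}}$. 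The paper instead linearises directly on the node variables with four big-$M$-free indicator inequalities per node (two active when $x_i=1$, two when $x_i=0$), avoiding the auxiliary variables at the cost of writing both the boosted and nominal affine forms explicitly. The two linearisations are equivalent and both yield a formulation of size linear in $|V(B)|$ plus $|E|$; yours introduces $O(1)$ extra variables per node, the paper's introduces none, and neither affects the linear-time claim of the lemma, which for a fixed $x$ needs no linearisation at all.
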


\begin{proof}
  This follows immediately from~\eqref{eq:bdd-prob-recursion/1}
  because the events are assumed
  to be independent: Let $p_i(x)$ denote the conditioned
  probabilities, and
  $p_{\text{\textsc{True}-sub}}(x)$, $p_{\text{\textsc{False}-sub}}(x)$
  the conditioned sub-BDD probabilities. Then
  \begin{equation}
    \Prob[\Phi(\xi)=1]=p_{1}(x)p_{\text{\textsc{True}-sub}}(x)+(1-p_{1}(x))p_{\text{\textsc{False}-sub}}(x),\label{eq:bdd-condprob-recursion/2}
  \end{equation}
  just as in~\eqref{eq:bdd-prob-recursion/2}.
\end{proof}

Again, equation~\eqref{eq:bdd-condprob-recursion/2} gives a recursive
set of equations (starting with $p_\top=1,p_\bot=0$ for the leafs),
but depending on how $p_i(x)$ is defined they may not be linear. In
the next section we will show that if the decisions are binary (or can
take only a fixed number of values), the problem can be formulated as
a compact mixed-integer programming problem.

\subsection{MIP Model for Aggregated Scenario Probabilities}
\label{sec:mip-model-scenario}

Let us now consider for ease of presentation the case where $|E|$
binary decision variables $x_i\in\{0,1\}$ influence the nominal event
probability $p_e$ such that 
$$p_i(x)=
\begin{cases}
  p_i & \text{if } x_i=0\\
  p_i+\Delta_i & \text{if } x_i=1
\end{cases}$$
where $\Delta_i\in[-p_i,1-p_i]$ is the boost or decrease
of the probability of event $e_i\in E$ when taking decision $x_i=1$.

With this definition, and in light of
equation~\eqref{eq:bdd-condprob-recursion/2}, we define for each arc
$(u,v)$ in the BDD $B^\leq_\alpha=(E,A,\epsilon,l)$
$$p_{(u,v)}(x)=
\begin{cases}
  p_i(x) &     \text{if }(u,v)\in A, \epsilon(u)=i, l((u,v))=1\\
  (1-p_i(x)) & \text{if }(u,v)\in A, \epsilon(u)=i, l((u,v))=0,
\end{cases}
$$
that is, depending on the decision $x_i$ all arcs in the BDD that
start at node $u$ associated with event $e_i$ are assigned the
appropriate probability depending on $x_i$ and whether they correspond
to the event being in the scenario ($l((u,v))=1$) or not
($l((u,v))=0$). Introducing intermediate variables for each BDD node
we can thus formulate the following MIP constraints to compute the
probability of all scenarios of $B^\leq_\alpha$ subject to decision variables
$x=(x_i)_{i=1,\dots,|E|}$:

\begin{equation}\label{eq:P-alpha-block}
  P_\alpha=\left\{
    (p_\alpha^\leq,x)\::\:
    \begin{aligned}
      p_\alpha^\leq(u) &= 1 && u=\top\in V(B^\leq_\alpha)\\
      p_\alpha^\leq(u) &= 0 && u=\bot\in V(B^\leq_\alpha)\\
      p_\alpha^\leq(u) &= p_{(u,v)}(x) p_\alpha^\leq(v) + p_{(u,w)}(x) p_\alpha^\leq(w)
      &&\\
      &&&\hspace{-4.5em}(u,v),(u,w)\in A(B^\leq_\alpha), v\neq w\\
      p_\alpha^\leq&\in [0,1]^{V(B^\leq_\alpha)}\\
      x&\in \{0,1\}^E
    \end{aligned}
    \right\}
\end{equation}

It is easy to see that the constraints of $P_\alpha$ can be written as
linear inequalities, which we avoided above to unclutter the
formulation. For example, in the case $(u,v),(u,w)\in
A(B^\leq_\alpha), v\neq w, l((u,v))=1, l((u,w))=0$ where $\epsilon(u)=i$:
$$ 
\begin{aligned}
  p_\alpha^\leq(u) &= p_{(u,v)}(x) p_\alpha^\leq(v) + p_{(u,w)}(x) p_\alpha^\leq(w)\\
  \text{can be written using inequalities}\\
  p_\alpha^\leq(u) &\leq (p_i+\Delta_i)p_\alpha^\leq(v) + (1-p_i-\Delta_i)p_\alpha^\leq(w) + (1-x_i)\\
  p_\alpha^\leq(u) &\leq  p_i          p_\alpha^\leq(v) + (1-p_i)         p_\alpha^\leq(w) + x_i\\
  p_\alpha^\leq(u) &\geq (p_i+\Delta_i)p_\alpha^\leq(v) + (1-p_i-\Delta_i)p_\alpha^\leq(w) - (1-x_i)\\
  p_\alpha^\leq(u) &\geq  p_i          p_\alpha^\leq(v) + (1-p_i)         p_\alpha^\leq(w) - x_i.
\end{aligned}
$$

Then the formulation of $P_\alpha$ contains $|V(B^\leq_\alpha)|+|E|$
variables and $4|V(B^\leq_\alpha)|+1$ constraints (plus box constraints
for all variables). 

Clearly, similar models can be built if $x_i$ is allowed to take a
finite number of discrete values, using $2$ constraints per possible
value of $x_i$, or disjunctive, or constraint programming techniques.

To obtain a MIP model for aggregable problems of the
form~(\ref{eq:2-stage-stoch}) we can simply combine blocks of the
form~\eqref{eq:P-alpha-block} using inclusion-exclusion on the scenario sets, exploiting:
\begin{equation}
  \label{eq:2-stage-stoch-agg-mip}
  \begin{aligned}
    \min_{\alpha\in\Crit(f)} &\alpha p^=_\alpha\\
  p^=_{\alpha_0}&=p^\leq_{\alpha_0}(u^*) & u^*\text{ root node of }B^\leq_{\alpha_0}\\
  p^=_{\alpha_i}&=p^\leq_{\alpha_i}(u^*) - p^=_{\alpha_{i-1}} & u^*\text{ root node of }B^\leq_{\alpha_i}, \alpha_i\in\Crit(f)\setminus\{\alpha_0\}\\
  Cx&\leq d\\
  (p^\leq_{\alpha_i},x)&\in P_{\alpha_i} & \alpha_i\in\Crit(f)
\end{aligned}
\end{equation}
Note that the $x$ variables will be shared between $P_{\alpha_i}$
and $P_{\alpha_j}$ for $\alpha_i,\alpha_j\in\Crit(f)$.

When can we expect a polynomial-size MIP formulation using aggregated 
scenarios? This depends on two prerequisites: We need to be sure that
the set $\Crit(f)$ and the BDDs $B^\leq_\alpha$ are small. The first
condition may be satisfied automatically for some problem classes, or
may be a consequence of the way that the instance is encoded, e.g.,
giving an explicit list of relevant objective values. The second
condition is hard to control in general, but often one can identify
parametric problem subclasses such that the width of $B^\leq_\alpha$
is appropriately bounded for each fixed value of the parameter: Bounds
on the BDD width have been studied widely. For our purposes we will
use the results of~\cite{haus-michini:14} and~\cite{pan-vardi:05}.

To actually construct the formulation we also need to ensure that the
construction of each $B^\leq_\alpha$ can be performed in
(output-)polynomial time. The easiest way to ensure this is to specify
a top-down-compilation rule, such that each BDD node is touched only
once in the construction, and decisions about whether to merge the
child subtrees are made immediately in the node in polynomial
time. For monotone Boolean functions, which encode the members of an
independence system, this can be achieved if equivalence of minors of
the associated circuit system can be checked efficiently,
see~\cite{haus-michini:14}. A prominent example is that of the graphic
matroid~\cite{sekine-imai-tani:95}, i.e. when scenarios are forests or
simple cycles of a graph. By Remark~\ref{rem:dual-bdd} this is also
always the case if the sets $\mathcal{M}^\leq_\alpha$ (or the dual
sets) are explicitly given. Thus if $\mathcal{M}^\leq_\alpha$ is
explicitly given or can be enumerated in polynomial time, efficient
BDD construction reduces to the question whether the BDD width
is polynomially bounded.

In light of this reformulation technique we make the following
definition.

\begin{definition}[polynomially aggregable 2-stage stochastic
  optimization problem]
  An aggregable $2$-stage stochastic optimization problem is called
  \emph{polynomially aggregable} if 
  \begin{itemize}
  \item the number of critical values $\Crit(f)$ of $f$ is polynomial,
    and
  \item all BDDs $B^\leq_\alpha(f)$ can be constructed in polynomial
    time.
  \end{itemize}
\end{definition}

It follows from~\cite[Thm.~6]{haus-michini:14} that whenever the 
matrix containing the incidence vectors of the elements of
$\mathcal{T}\MTP_\alpha(f)$
has bandwidth $k$, then $B^\leq_\alpha(f)$
has size at most $n2^{k-1}$, which is polynomial if $k\in\mathcal{O}(log
n)$. Hence 2-stage stochastic optimization problems with
bandwidth-limited sets $\mathcal{T}\MTP_\alpha(f)$ and a polynomial
number of critical values $\Crit(f)$ are polynomially aggregable:

\begin{lemma}\label{lem:something-with-guaranteed-size}
  2-stage stochastic optimization problems with input size $\sigma$,
  a polynomial number of critical values $\Crit(f)$, and an
  explicitly given sets $\mathcal{T}\MTP_\alpha(f)$ whose
  incidence matrix has bandwidth
  $k\in\log(\sigma)$ are polynomially aggregable.
\end{lemma}

Note that our definition of~\eqref{eq:2-stage-stoch-agg-mip} contains
as a special case the \emph{union of products (UPP)} problem (this is
the case where the decisions $x_e$ have no influence on the
probabilities). \cite{ball-provan:88} show that UPP is $\NP$-hard,
even when the sets $\MTP(f)$ are explicitly given -- thus unless
$\mathcal{P}=\NP$ the BDD construction will be exponential in these
cases.

\section{Applications}
\label{sec:applications}

\subsection{Pre-disaster Planning Problem}
\label{sec:pre-disast-plann}
The pre-disaster planning problem we consider first is an instance of the running example~\ref{ex:pre-disaster-planning} we used
throughout.  As mentioned before, the algorithmic challenges are easy
to answer: The set of all shortest paths $\MTP(f_{\text{SP}})$ between
a pair of nodes can be enumerated in time
$\mathcal{O}((|V|+|E|)|\MTP(f_{\text{SP}})|)$ using the classic
restricted backtracking technique of~\cite{read-tarjan:75} or the
recent output-linear method of~\cite{birmele-ferreira-etal:13}. One
could alternatively enumerate the elements of
$\mathcal{T}\MTP(f_{\text{SP}})$ using the method
of~\cite{provan-shier:96}, which may be preferable if one is
interested in displaying the minimal sets of simultaneously failing
edges characterizing a bundle of scenarios to a user.  Actually, since
$\mathcal{T}\MTP_\alpha(f_{\text{SP}})$ may be exponential in the size
of $\MTP_\alpha(f_{\text{SP}})$ (a graph consisting of $k$
edge-disjoint $st$-paths with at most $\alpha$ edges each has $\alpha^k$
minimal sets of edge failures defining $\MTP_\alpha(f_{\text{SP}})$),
it may be preferable to start computing
both sets concurrently, and stop when one is complete. From both sets
one can construct identically-sized BDD (which differ only by the arc
labels), since the sets define a pair of dual monotone Boolean
functions.

\begin{table}
  \centering
  \subfloat[][With path length cutoffs, penalty $120$ for excessively
  long paths/unconnectedness.]{
    {\small\def~{\hphantom{0}}
    \begin{tabular}{lc@{\quad}cc@{\quad}cc}
    \toprule
    O-D-pair &Dist.-Limit & \#bundles & MIP size & \# BDDs & MIP size\\
                            &&
                            \multicolumn{2}{c}{\small(in~\textsc{Prestwich
                              '13})}
                            &
                            \multicolumn{2}{c}{\small(using BDD-bundles)}\\
    \midrule
    14--20 &     31 & 39  &                  &  4 & $~237 \times  89~$\\ 
    14--7  &     31 & 29  &                  &  6 & $~333 \times 113$\\
    12--18 &     28 & 56  &                  &  4 & $~237 \times  89~$ \\ 
     9--7  &     19 & 26  &                  &  4 & $~164 \times  71~$\\ 
     4--8  &     35 & 73  &                  &  6 & $~421 \times 135$\\ 
     \midrule
     \multicolumn{1}{r}{$\sum$}
           &        &223 &$14174\times 6221$&  24& $ 1466\times 454$\\
     \multicolumn{1}{r}{MIP solution}
           &     &                    &    && $1$s\\
    \bottomrule
  \end{tabular}
}
}\vspace{\baselineskip}

\subfloat[][Without path length cutoffs.]{
  \centering
  {\small
    \begin{tabular}{l@{\quad}cc@{\quad}cc}
    \toprule
    O-D-pair  & \#bundles & MIP size & \# BDDs & MIP size\\
              &           
                            \multicolumn{2}{c}{\small(in~\textsc{Prestwich
                              '13})}
                            &
                            \multicolumn{2}{c}{\small(using BDD-bundles)}\\
    \midrule
    14--20 & 378 &                  &  14&  $2609 \times  682$\\ 
    14--7  & 712 &                  &  30& $13097 \times 3304$\\ 
    12--18 & 233 &                  &  8 &   $\hphantom{00}997 \times 1026$\\ 
     9--7  & 266 &                  &  8 &  $1137 \times  314$\\ 
     4--8  & 305 &                  &  12& $2301  \times  605$\\ 
     \midrule
     \multicolumn{1}{r}{$\sum$}
           & 1894&$123682\times 56851$&  72& $20137\times 5064$\\
     \multicolumn{1}{r}{MIP solution}
           &     &                    &    & $36$s\\
    \bottomrule
  \end{tabular}}}
\caption{Scenario partition bundles vs. BDD bundles in the Istanbul
    road network problem instance
    of~\cite{peeta-salman-gunnec-viswanath:10}.
    MIP size is number of
    constraints/number of rows, excluding $[0,1]$-box constraints
    before preprocessing of the solver. All
  MIPs have $30$ binary decision variables and one budget
  constraint. Total path enumeration and BDD construction time is
  $<1s$. CPLEX 12.5 in single-thread mode on a dedicated 24-core,
  4-CPU Intel X5650/2.67 GHz system with 96Gb RAM running Linux 3.14.17.}
  \label{tab:part-vs-bdd-bundles}
\end{table}

\begin{table}
  \centering
  {\small\let\mc\multicolumn
    \begin{tabular}{r@{\quad}rrr@{\quad}rrrrrr}
    \toprule
        &      &                 & \# O-D-            &min  &25\%   &median & 75\%  & 99\%  & max\\            
    $n$ & arcs & $2^{\text{arcs}}$  &       pairs &size &size   &size   &size   &size   & size\\
    \midrule                                                                 
    \multicolumn{10}{c}{$\alpha=1.1$}\\[2ex]
    1   &  5   & $32$            & 64           & 2  & 2 & 3 &  3 &   5 &     5\\
    2   & 11   & $2048$          & 384          & 2  & 3 & 4 &  5 &  14 &    26\\
    3   & 19   &$524288$         & 1280         & 2  & 3 & 5 &  6 &  33 &   126\\
    4   & 30   &$1.1\times10^9$  & 3200         & 2  & 4 & 6 &  9 &  79 &   623\\
    5   & 43   &$8.8\times10^{12}$& 6720        & 2 & 5 & 7 & 17 & 125 &   921\\
    6   & 59   &$5.8\times10^{17}$& 12544       & 2 & 6 & 9 & 26 & 258 &  2778\\
    7   & 77   &$1.5\times10^{23}$& 21504       & 2 & 6 & 11& 35 & 518 & 22967\\
    \midrule
    \multicolumn{10}{c}{$\alpha=1.5$}\\[2ex]
    1   &  5   & $32$             &64           &2 & 2 & 3&   4 &     9 &      9\\
    2   & 11   & $2048$           &384          &2 & 3 & 4&   8 &    37 &     64\\
    3   & 19   &$524288$          &1280         &2 & 4 & 6&  22 &   261 &    565\\
    4   & 30   &$1.1\times10^9$   &3200         &2 & 5 &16&  69 &  1397 &   6080\\
    5   & 43   &$8.8\times10^{12}$& 6720        &2 & 6 &26& 157 &  5062 &  24374\\
    6   & 59   &$5.8\times10^{17}$& 12544       &2 &12 &58& 398 & 22525 & 134298\\
    7   & 77   &$1.5\times10^{23}$& 21504       &2 &16 &98& 943 &122126 &1870925\\
    \midrule
    \multicolumn{10}{c}{$\alpha=\infty$}\\[2ex]
    1   &  5   & $32$             &64           &3 &    6 &     6 &     8 &      9 &      9 \\
    2   & 11   & $2048$           &384          &2 &    9 &    21 &    27 &     55 &     64\\
    3   & 19   &$524288$          &1280         &2 &   74 &   197 &   347 &    884 &   1584\\
    4   & 30   &$1.1\times10^9$   &3200         &2 &  128 &   902 &  2295 &   8218 &  16267\\
    5   & 43   &$8.8\times10^{12}$& 6720        &2 &  543 &  7336 & 20748 &  74896 & 220532\\
    6   & 59   &$5.8\times10^{17}$& 12544       &2 & 1341 & 13420 & 56704 & 679862 &1462435\\
    7   & 77   &$1.5\times10^{23}$& 21504       &2 &14311 &125457 &678671 &3680199 &8186671\\
    \bottomrule
  \end{tabular}}
\caption{BDD bundles for random road networks on grid of $(n+1)^2$
  nodes with density approximately $1.2$, see~\cite{masucci-smith-crooks-batty:09}.
  Experiments repeated for $32$ different networks and for all origin-destination pairs in each network, subject to length cutoff $\alpha\cdot d$, where $d$ is the shortest path distance
  for the origin-destination pair under consideration. Number and size of BDDs is reported for minimum, maximum, 25/50/75/99\% quantiles. Average CPU time per experiment across all 411264 experiments is 2.5s. Compare to \cite[Table~7]{laumanns:14}.}
  \label{tab:random-road}
\end{table}

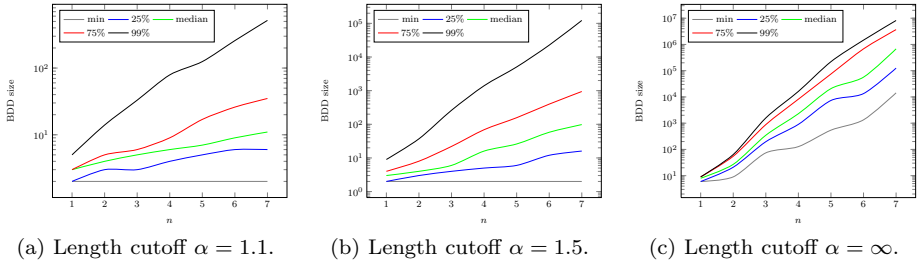
\begin{figure}
  \centering
  \subfloat[][Length cutoff $\alpha=1.1$.]{
  \begin{tikzpicture}[scale=.45]
        \begin{axis}[
          ylabel=BDD size,
          xlabel=$n$,
          ymin=0,
          legend columns=3, legend pos=north west,
          ymode=log
          ]
          \addplot[smooth,color=gray] table [x=n, y index=2] {rr-1.1.csv};
          \addplot[smooth,color=blue] table [x=n, y index=3] {rr-1.1.csv};
          \addplot[smooth,color=green] table [x=n, y index=4] {rr-1.1.csv};
          \addplot[smooth,color=red] table [x=n, y index=5] {rr-1.1.csv};
          \addplot[smooth] table [x=n, y index=6] {rr-1.1.csv};
          \legend{min, 25\%, median, 75\%, 99\%, max}
        \end{axis}

      \end{tikzpicture}
    }
    \hfill
    \subfloat[][Length cutoff $\alpha=1.5$.]{
      \begin{tikzpicture}[scale=.45]
        \begin{axis}[
          ylabel=BDD size,
          xlabel=$n$,
          ymin=0,
          legend columns=3,legend pos=north west,
          ymode=log
          ]
          \addplot[smooth,color=gray] table [x=n, y index=2] {rr-1.5.csv};
          \addplot[smooth,color=blue] table [x=n, y index=3] {rr-1.5.csv};
          \addplot[smooth,color=green] table [x=n, y index=4] {rr-1.5.csv};
          \addplot[smooth,color=red] table [x=n, y index=5] {rr-1.5.csv};
          \addplot[smooth] table [x=n, y index=6] {rr-1.5.csv};
          \legend{min, 25\%, median, 75\%, 99\%, max}
        \end{axis}
      \end{tikzpicture}
    }
    \hfill
    \subfloat[][Length cutoff $\alpha=\infty$.]{
      \begin{tikzpicture}[scale=.45]
        \begin{axis}[
          ylabel=BDD size,
          xlabel=$n$,
          ymin=0,
          legend columns=3,legend pos=north west,
          ymode=log
          ]
          \addplot[smooth,color=gray] table [x=n, y index=2] {rr-inf.csv};
          \addplot[smooth,color=blue] table [x=n, y index=3] {rr-inf.csv};
          \addplot[smooth,color=green] table [x=n, y index=4] {rr-inf.csv};
          \addplot[smooth,color=red] table [x=n, y index=5] {rr-inf.csv};
          \addplot[smooth] table [x=n, y index=6] {rr-inf.csv};
          \legend{min, 25\%, median, 75\%, 99\%, max}
        \end{axis}
      \end{tikzpicture}
    }
  \caption{BDD size comparison across instances of Table~\ref{tab:random-road}..}
  \label{fig:rr-bdd-plots}
\end{figure}

We can apply Lemma~\ref{lem:something-with-guaranteed-size} to this
problem as follows: For fixed $s$ and $t$ and each $\alpha$ the set
$\mathcal{T}\mathcal{M}^\leq_\alpha$ contains the minimal sets of
edges that need to be removed to destroy all shortest $st$-paths of
length at most $\alpha$ (sometimes called $\alpha$-length-bounded
cuts~\cite{baier-erlebach-hall-koehler-schilling-skutella:06}). Clearly,
each such set is a subset of some minimal $st$-cut. Hence, if the
incidence matrix of all minimal $st$-cuts has bandwidth at most $k$,
so do all incidence matrices for the clutters
$\mathcal{T}\mathcal{M}^\leq_\alpha$. Note that for the incidence
matrix of all minimal $st$-cuts to have bandwidth at most $k$, in
particular the cardinality of each cut must be $\leq
k$. Lemma~\ref{lem:bandwidth-limited-cuts} shows a class of such
graphs.

\begin{lemma}\label{lem:bandwidth-limited-cuts}
  Let $G_1,\dots,G_l$ be graphs for which the incidence matrices of
  minimal $s_i-t_i$-cuts ($s_i,t_i\in N(G_i)$) all have bandwidth at
  most $k$. Let $v$ be a new node, i.e. $v\notin\bigcup_iN(G_i)$. Then
  all graphs $G=(N,E)$ of the form $N=\{v\}\cup\bigcup_i N(G_i)$,
  $E=E_v\cup\bigcup_iE(G_i)$ with $E_v=\bigcup_iE^i_v$ such that
  $E_v^i\subset\{(x,v)\::\: x\in N(G_i)\}$ and $|E^i_v|\leq 1$ have
  incidence matrices of minimal $st$-cuts (for all $s,t\in N$) with
  bandwidth at most $k$.
\end{lemma}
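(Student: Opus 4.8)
The plan is to reduce every minimal $st$-cut of the combined graph $G$ to a structure we already control. Write $a_i$ for the unique node of $N(G_i)$ incident to the (at most one) edge of $E_v^i$, so that $(a_i,v)$ is a bridge joining the otherwise vertex-disjoint piece $G_i$ to the hub $v$. The central step is a structural claim about the shape of minimal cuts: for every pair $s,t\in N$, each minimal $st$-cut of $G$ is \emph{either} entirely contained in $E(G_i)$ for a single $i$, where it is a minimal cut between two nodes of $G_i$, \emph{or} it is a single bridge edge $(a_i,v)$. I read the hypothesis as providing, for each $i$, one common column ordering $\pi_i$ of $E(G_i)$ under which \emph{all} minimal cuts of $G_i$ (over all node pairs) have bandwidth at most $k$; this is exactly what the conclusion (``for all $s,t\in N$'') requires, and it is the natural meaning of a single incidence matrix having bandwidth $k$.

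To prove the structural claim I would argue by the series structure of the $st$-paths. When $s,t\in N(G_i)$, the part of $G$ reachable from $G_i$ through $(a_i,v)$ is a pendant hung on a bridge, which no simple $s$-$t$ path can use (the single bridge cannot be traversed twice), so every $s$-$t$ path stays inside $G_i$ and the minimal $st$-cuts of $G$ coincide with those of $G_i$. When $s,t$ lie in distinct pieces $G_i,G_j$ (or one of them is $v$), every $s$-$t$ path factors as a concatenation of an $s$-$a_i$ path in $G_i$, the two bridges, and an $a_j$-$t$ path in $G_j$, and these segments are in series. A minimality argument then forces a minimal transversal to meet exactly one segment minimally: if a minimal cut $C$ contains a bridge it must equal that single bridge; and if $C$ avoids both bridges while $C\cap E(G_i)$ is not an $s$-$a_i$-cut, then some $s$-$a_i$ path survives, so $C\cap E(G_j)$ must already be an $a_j$-$t$-cut, whence $C\subseteq E(G_j)$ by minimality (and symmetrically). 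This excludes cuts straddling two pieces or mixing a bridge with interior edges.

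With the structural claim in hand, the bandwidth bound is bookkeeping. Build a global ordering of $E$ by concatenating the blocks $E(G_1),\dots,E(G_l)$, each internally ordered by its $\pi_i$, and appending all bridge edges at the end. Each block is contiguous and retains its internal order, so any cut contained in $E(G_i)$ occupies exactly the window of $k$ consecutive columns it occupied under $\pi_i$, while each single-bridge cut occupies one column, a window of width $1\le k$. Since by the structural claim these exhaust the minimal $st$-cuts over all $s,t\in N$, the combined incidence matrix has bandwidth at most $k$.

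I expect the main obstacle to be the second paragraph: making the series-decomposition argument fully rigorous, in particular cleanly ruling out minimal cuts that use edges from two different pieces. The degenerate configurations—pieces with $E_v^i=\varnothing$ (so $G_i$ is a separate component and no cross cuts exist), disconnected pieces, or $s,t$ coinciding with an attachment node $a_i$ or with $v$—must be checked but are benign and fall under the same reasoning.
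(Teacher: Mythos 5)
Your proof is correct and follows essentially the same route as the paper's: a case analysis on the location of $s$ and $t$ that uses $v$ (equivalently, the bridge edges) as a separator to show every minimal $st$-cut is either a minimal cut inside a single $G_i$ or a lone bridge edge, after which the bandwidth bound is immediate. You supply more detail than the paper does --- the explicit series-decomposition argument and the concatenated column ordering --- and your stronger ``one common ordering per $G_i$'' reading of the hypothesis is not needed (a per-pair ordering suffices, since the conclusion is also stated per pair), but neither point changes the substance.
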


\begin{proof}
  If $s$ and $t$ are in an original graph $G_i$ then the statement is
  true by assumption. If $s\in G_i$ and $t=v$ then all minimal cuts
  are either the edge $(v,x)\in E^i_v$, or $s-x$-cuts in $G_i$, so
  have bandwidth at most $k$. Otherwise there is a path from $s$ to
  $t$ in which $v$ is a separating node, so cuts either separate $s$
  and $v$ or $v$ and $t$ and the previous case applies.
\end{proof}

This recursive construction yields tree-like graphs composed of
smaller bandwidth-bounded subgraphs and connecting nodes that are
separators.

We conjecture that the pre-disaster planning problem has polynomial-size exact MIP reformulation if the line graph of $G=(V,E)$ has
logarithmically bounded pathwidth $k\in\mathcal{O}(\log(|V|+|E|))$, by exploiting few vertex cuts in the line graph and translating them to few edge cuts in the original graph, but refer this to further work.



From a practical perspective, we obtained very small BDDs for the
relatively sparse graphs arising in road networks using the following
heuristic. For each $\alpha\in\Crit(f_{\text{SP}})$ use the following
edge ordering: Count the number of occurrences of edge $e$ in the sets
of $\MTP_\alpha(f_{\text{SP}})$ and sort them by increasing value. The
intuition is that edges which are in no element of
$\MTP_\alpha(f_{\text{SP}})$ are `don't care' edges for every
scenario, and will thus not require introduction of any node in the
BDD, while putting central edges (participating in many minimal
scenarios) at the end will not lead to many branching decisions in the
BDD until the very last layers. Furthermore, these edges will be
included to complete many scenarios, so when merging isomorphic
sub-dags in the BDD construction it is good to find them in the bottom
layers. The ordering obtained will be similar, at least for graphs
with few central nodes, to a heuristic low-bandwidth ordering obtained
by the Cuthill-McKee algorithm (\cite{cuthill-mckee:69}). The circuit
system minor equivalence check amounts to a direct matrix minor
comparison, since we assume that the circuit system is explicitly
given by the list of minimal scenarios.

This allows us to give an exact formulation of the pre-disaster
planning problem of~\cite{peeta-salman-gunnec-viswanath:10} as a MIP
of very small size, an order of magnitude smaller than in the recent
work
of~\cite{prestwich-laumanns-kawas:13,laumanns:14}. Table~\ref{tab:part-vs-bdd-bundles}
shows the MIP sizes for the Istanbul instance, and
Table~\ref{tab:random-road} shows number of BDDs and sizes for
randomly constructed networks of similar density, as done
in~\cite{laumanns:14} (see also~\cite{masucci-smith-crooks-batty:09}
for the specifics on sampling random connected graphs). Again, the BDD
sizes directly correspond to MIP sizes.

We remark that the partitioning scheme and `molded distribution'
concept of~\cite{prestwich-laumanns-kawas:13,laumanns:14} can be seen
as a special case of our construction where all BDDs are simply paths,
not arbitrary DAGs.

\subsection{Independence Systems with failing elements}
\label{sec:independence-system}
Let $\mathcal{I}=(E,S)$ with $S\subseteq 2^E$ be an independence
system, i.e. $\forall S_1\in S: S_2\subseteq S_1\Rightarrow S_2\in
S$. Let $w:S\to\R$ with $w(S_i)=\sum_{e\in S_i}w_e$ with $w_e\geq 0$
be a nonnegative linear weight function. Consider a setting where
elements $e\in E$ can fail independently at random.  Let $x_e\in\{0,1\}$
be decision variables such that the element failure probabilities are
$p_e(x_e)=
\begin{cases}
  p_e^1&x_e=1\\
  p_e^0&x_e=0.
\end{cases}$

Consider the 2-stage stochastic programming problem of the
form~\eqref{eq:2-stage-stoch} where for a scenario of failing edges
$\xi\subseteq E$ the inner optimization problem is given by
$f(\xi)=\max_{S_i\cap\xi\in S}w(S_i)$, i.e. by optimizing $w$ over the
restriction of $\mathcal{I}$ to $\xi$. Then this setting covers the
following problems:
\begin{itemize}
\item Expected maximum weight forests in a graph $G=(V,E)$\\
  The independence system considered is the graphic matroid with
  ground set elements $E$, non-negative edge weights $w_e$. Edges can
  fail (independently) with some failure probability that can be
  influenced by decisions $x_e$. The goal is to minimize or maximize
  the expected weight of the maximal forest under edge decisions.
\item Expected stability number in a graph $G=(V,E)$\\
  The independence system considered is the set of stable sets in $G$.
\item Matroid Steiner Problems~\cite{colbourn-pulleyblank:89}
\end{itemize}

As mentioned before, BDD construction is output-polynomial in the graphic matroid. We point out that in all of the cases we believe that relevant graph classes with bounded BDD width can be characterized, but each may be a separate and intricate topic of investigation.

\subsection{Stochastic Flow Network Interdiction}
\label{sec:flow-capacity}

The flow value function of a capacitated network flow problems is
monotone wrt. the network capacities. Consider a setting where arc
capacity failures (or reductions) occur randomly. Decisions allow us
to increase or decrease reliability of the links. If we assume
complete failure of arcs, the fundamental
theorem of network flow theory about path decomposition of flows lets
us link this problem to a classic maximum flow problem. It is more
challenging to consider the case of (discrete) arc capacity changes;
see~\cite{evans:76} and papers citing that for structural statements
on the lattice of flow distributions under capacity changes.

\begin{figure}
  \centering
  \subfloat[][\texttt{SNIP(IB)-4x9}: $38$ nodes, $67$ arcs ($24$ targetable), interdiction success rate $75\%$. Network has $10$ different flow values; $\sum$ BDD sizes: $207$ nodes, construction time $3.3$s ($3969$ max flow oracle calls), MIP size $1771\times 434$]{
    \resizebox{\textwidth}{!}{
\begin{tikzpicture}
  \begin{axis}[compat=1.3,
    title={investment/expected flow plot},
    scale only axis,
    axis y line*=left,
    xlabel={interdiction budget},
    ymin=0,
    ylabel={expected flow}]
    \addplot+[blue] 
    table [x=budget, y=obj, col sep=semicolon] {snip-4x9-timings.csv};
    \label{fig:plot:snip49:flow}
  \end{axis}
  
  \begin{axis}[compat=1.3,
    scale only axis,
    axis y line*=right,
    axis x line=none,
    ymin=0,
    legend pos=outer north east,
    ylabel={CPLEX 12.5 time (in s)}]
    \addlegendimage{/pgfplots/refstyle={fig:plot:snip49:flow}}
    \addlegendentry{expected flow}
    \addplot+[green]
    table [x=budget, y=time, col sep=semicolon]
    {snip-4x9-timings.csv};
    \addlegendentry{time}
  \end{axis}
\end{tikzpicture}
\begin{tikzpicture}
  \begin{axis}[compat=1.3,
    title={log/log investment/yield plot},
    scale only axis,
    xmode=log,
    ymode=log,
    axis y line*=left,
    xlabel={interdiction budget},
    ymin=0,
    ylabel={expected flow}]
    \addplot+[blue] 
    table [x=budget, y=obj, col sep=semicolon] {snip-4x9-timings.csv};
    \label{fig:plot:snip49:flow}
    \addlegendentry{expected flow}
  \end{axis}
  
\end{tikzpicture}
    }
  }

  \subfloat[][\texttt{SNIP(IB)-7x5}:
  $37$ nodes, $72$ arcs ($22$ targetable), interdiction success rate $75$\%. Network has $19$ different flow values;
  $\sum$ BDD sizes: $2302$ nodes, construction time $5.9$s ($61988$ max flow oracle calls), MIP size $22867\times 5133$]{
    \resizebox{\textwidth}{!}{
\begin{tikzpicture}
  \begin{axis}[compat=1.3,
    title={investment/expected flow plot},
    scale only axis,
    axis y line*=left,
    xlabel={interdiction budget},
    ymin=0,
    ylabel={expected flow}]
    \addplot+[blue] 
    table [x=budget, y=obj, col sep=semicolon] {snip-7x5-timings.csv};
    \label{fig:plot:snip75:flow}
  \end{axis}
  
  \begin{axis}[compat=1.3,
    scale only axis,
    axis y line*=right,
    axis x line=none,
    ymin=0,
    legend pos=outer north east,
    ylabel={CPLEX 12.5 time (in s)}]
    \addlegendimage{/pgfplots/refstyle={fig:plot:snip75:flow}}
    \addlegendentry{expected flow}
    \addplot+[green]
    table [x=budget, y=time, col sep=semicolon]
    {snip-7x5-timings.csv};
    \addlegendentry{time}
  \end{axis}
\end{tikzpicture}
\begin{tikzpicture}
  \begin{axis}[compat=1.3,
    title={log/log investment/yield plot},
    scale only axis,
    xmode=log,
    ymode=log,
    axis y line*=left,
    xlabel={interdiction budget},
    ymin=0,
    ylabel={expected flow}]
    \addplot+[blue] 
    table [x=budget, y=obj, col sep=semicolon] {snip-7x5-timings.csv};
    \label{fig:plot:snip75:flow:loglog}
    \addlegendentry{expected flow}
  \end{axis}
  
\end{tikzpicture}
    }
  }


  \subfloat[][\texttt{SNIP(IB)-10x10}:
  $10\times 10$ grid,  35\% interdictable arcs, vertical arc orientation unif. random up/down $50:50$, capacities $10..100$ uniformly random multiples of $10$, $75$\% interdiction success rates.]{
    \resizebox{\textwidth}{!}{
\begin{tikzpicture}
  \begin{axis}[compat=1.3,
    title={investment/expected flow plot},
    scale only axis,
    axis y line*=left,
    xlabel={interdiction budget},
    ymin=0,
    ylabel={expected flow}]
    \addplot+
    table [ x index={0}, y index={1}, col sep=space]  {snip-10x10.35-timings-marco.csv};
    \label{fig:plot:snip10x10.35}
  \end{axis}

  \begin{axis}[compat=1.3,
    scale only axis,
    axis y line*=right,
    axis x line=none,
    scaled ticks=false,
    ymin=0,
    ymax=375000,
    legend pos=outer north east,
    ylabel={CPLEX 12.6.2 time (in s)}]
    \addlegendimage{/pgfplots/refstyle={fig:plot:snip10x10.35}}
    \addlegendentry{expected flow}
    \addplot+[green]
    table [x index={0}, y index={2}, col sep=space]
    {snip-10x10.35-timings-marco.csv};
    \addlegendentry{time}
  \end{axis}

\end{tikzpicture}

\begin{tikzpicture}
  \begin{axis}[compat=1.3,
    title={log/log investment/yield plot},
    scale only axis,
    xmode=log,
    ymode=log,
    axis y line*=left,
    xlabel={interdiction budget},
    ymin=0,
    ylabel={expected flow}]
    \addplot+[blue] 
    table [x index={0}, y index={1}, col sep=space] {snip-10x10.35-timings-marco.csv};
    \label{fig:plot:snip101035:flow:loglog}
    \addlegendentry{expected flow}
  \end{axis}
  
\end{tikzpicture}

    }
  }
    
  \caption{SNIP(IB) Instances of~\cite{cormican-morton-wood:98}. Computations performed using CPLEX on a single node 16-core Intel(R) Xeon(R) CPU E5-2698 v3 @ 2.30GHz.}
  \label{fig:snip-instances}
\end{figure}
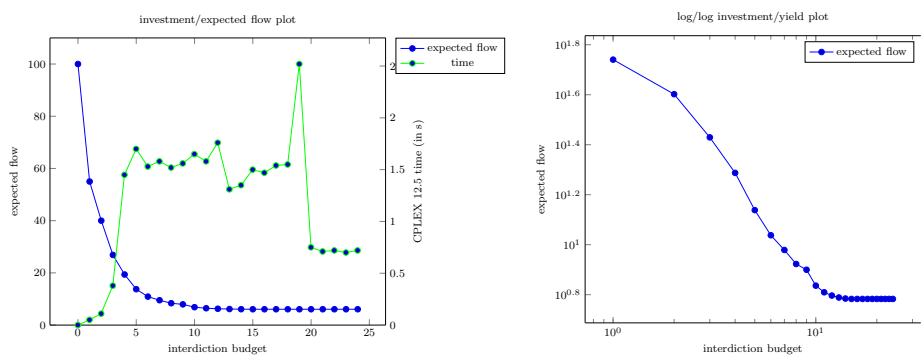
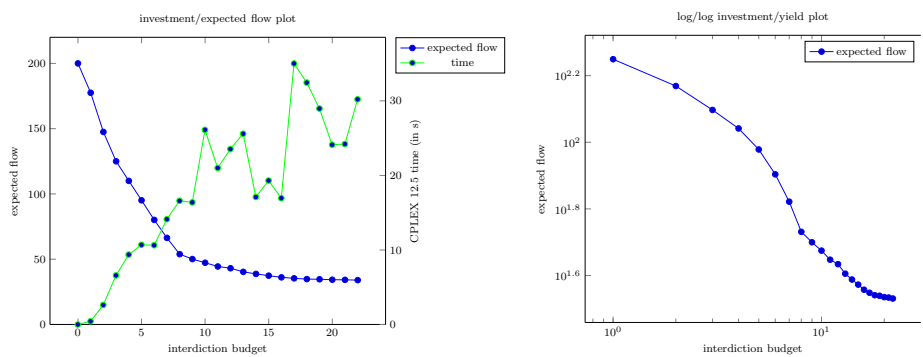
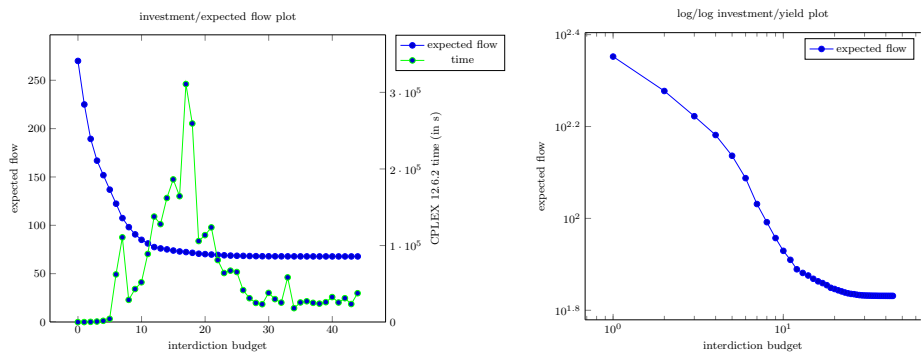

In particular, the network interdiction problem SNIP(IB) introduced
in~\cite{cormican-morton-wood:98} fits this framework:
\begin{definition}[stochastic network interdiction problem SNIP(IB)]
  Let $G=(V,A)$ be a (directed) graph, $s,t\in V$ a source and a sink
  node, and arc capacities $\{c_e\}_{e\in A}$ be given. For each arc
  $e\in A$ let a probability $p_e$ of an interdiction attempt being
  successful be given, where success means that the arc capacity
  becomes $0$. The problem SNIP(IB) for $(G,\{p_e\}_{e\in A},s,t)$ is
  then the problem of selecting a set of arcs to interdict, such that
  the expected maximum flow in the remaining network is minimized.
\end{definition}

To solve an SNIP(IB) problem using our framework we need to compute
the set of all $st$-flows arising under edge removal from the graph
$G$. By the max flow min cut theorem the number of different flow
values appearing depends on the number of possible cut values, so in
particular a rough estimate shows that if the number of different arc
capacities is $k$ there are at most $\binom{|E|}{k}$ possible cut
values. Having a limited number of arc capacities is very natural in
many applications where the capacity is determined by a technological
parameter of the arc type, e.g., link speed or pipe diameter.

For the purpose of showing practicality of our approach we resort to
computing the sets of all minimal edge sets whose removal reduces the
maximal $st$-flow in the residual graph to less than $\alpha$ using a
technique successfully applied to determining minimal cut sets in
metabolic flux
networks~\cite{haus-klamt-stephen:08,ballerstein-haus-kamp-klamt:2011}. In
particular, we solve the max-flow problem using the \texttt{lemon-1.3.1} library~\cite{lemon-lib} in
the membership oracle of the joint generation
procedure~\cite{FK96,cl-jointgen}. In Figure~\ref{fig:snip-instances} we use
the benchmark SNIP(IB)-instances of~\cite{cormican-morton-wood:98}. 
Due to the fact that the MIP only depends on the network structure and the set of interdictable arcs we can seamlessly change the budget value or interdiction success probabilities without re-generating the MIP. This separation of structure and data is a major advantage to other methods that take these parameters into account throughout the solution procedure like~\cite{janjarassuk-linderoth:08}, and makes it possible to do a parameter study accounting for all conceivable budget values.

In summary, we find that we obtain MIPs that allow us to solve the exact reformulation, rather than resorting to solving approximations, and for all conceivable budgets, but that current MIP solvers -- we've also done limited trials with FICO Xpress, Gurobi and SCIP -- struggle with the particular problem structure more than we expected. The formulation with few, binary variables coupling many continuous variables in equations seems like a natural fit for decomposition approaches, which we are planning to investigate in future research.






\begin{acknowledgements}
  This work was partially supported by EU-FP7-ITN 289581 `NPlast'
  (while the second author was at ETH Zürich) and EU-FP7-ITN 316647 `MINO'
  (while the first author was at ETH Zürich).
\end{acknowledgements}

\bibliographystyle{plain}
\bibliography{scenario-bundling}

\begin{thebibliography}{10}

\bibitem{ahmed:00}
S.~Ahmed.
\newblock {\em Strategic Planning Under Uncertainty: Stochastic Integer
  Programming Approaches}.
\newblock PhD thesis, University of Illinois at Urbana-Champaign, 2000.

\bibitem{amaldi-pfetsch-trotter:03}
Edoardo Amaldi, Marc~E. Pfetsch, and Leslie E.~Trotter Jr.
\newblock On the maximum feasible subsystem problem, {IISs} and
  {IIS}-hypergraphs.
\newblock {\em Math. Program.}, 95(3):533--554, 2003.

\bibitem{baier-erlebach-hall-koehler-schilling-skutella:06}
Georg Baier, Thomas Erlebach, Alexander Hall, Ekkehard Köhler, Heiko
  Schilling, and Martin Skutella.
\newblock Length-bounded cuts and flows.
\newblock In Michele Bugliesi, Bart Preneel, Vladimiro Sassone, and Ingo
  Wegener, editors, {\em Automata, Languages and Programming}, volume 4051 of
  {\em Lecture Notes in Computer Science}, pages 679--690. Springer Berlin
  Heidelberg, 2006.

\bibitem{ball-provan:88}
Michael~O. Ball and J.~Scott Provan.
\newblock Disjoint products and efficient computation of reliability.
\newblock {\em Operations Research}, 36(5):703--715, 1988.

\bibitem{ballerstein-haus-kamp-klamt:2011}
Kathrin Ballerstein, Axel von Kamp, Steffen Klamt, and Utz-Uwe Haus.
\newblock Minimal cut sets in a metabolic network are elementary modes in a
  dual network.
\newblock {\em Bioinformatics}, 28(3):381--387, 2012.

\bibitem{billinton-allan:92}
R.~Billinton and Allan~R. N.
\newblock {\em Reliability Evaluation of Engineering Systems: {C}oncepts and
  Techniques}.
\newblock Plenum Press, 2 edition, 1992.

\bibitem{birmele-ferreira-etal:13}
Etienne Birmel{\'e}, Rui Ferreira, Roberto Grossi, Andrea Marino, Nadia
  Pisanti, , Romeo Rizzi, and Gustavo Sacomoto.
\newblock Optimal listing of cycles and st-paths in undirected graphs.
\newblock In {\em Proceedings of the Twenty-Fourth Annual ACM-SIAM Symposium on
  Discrete Algorithms}, pages 1884--1896, 2013.

\bibitem{bryant:86}
R.E. Bryant.
\newblock Graph-based algorithms for boolean function manipulation.
\newblock {\em IEEE Transactions on Computers}, C-35(8):677--691, Aug. 1986.

\bibitem{lemon-lib}
COIN-OR.
\newblock Lemon, a graph library.
\newblock \url{http://lemon.cs.elte.hu/trac/lemon}, 2014.

\bibitem{colbourn-pulleyblank:89}
Charles~J Colbourn and William~R Pulleyblank.
\newblock Matroid steiner problems, the tutte polynomial and network
  reliability.
\newblock {\em Journal of Combinatorial Theory, Series B}, 47(1):20 -- 31,
  1989.

\bibitem{colvin-maravelias:10}
Matthew Colvin and Christos~T. Maravelias.
\newblock Modeling methods and a branch and cut algorithm for pharmaceutical
  clinical trial planning using stochastic programming.
\newblock {\em European Journal of Operational Research}, 203(1):205 -- 215,
  2010.

\bibitem{colvin-maravelias:08}
M.~Colving and C.~T. Maravelias.
\newblock A stochastic programming approach for clinical trial planning in new
  drug development.
\newblock {\em Computers and Chemical Engineering}, 32(11):2626--2642, 2008.

\bibitem{cormican-morton-wood:98}
Kelly~J. Cormican, David~P. Morton, and R.~Kevin Wood.
\newblock Stochastic network interdiction.
\newblock {\em Operations Research}, 46(2):184--197, 1998.

\bibitem{crama-hammer-book}
Yves Crama and Peter~L. Hammer.
\newblock {\em Boolean Functions: {T}heory, {A}lgorithms, and {A}pplications}.
\newblock Encyclopedia of Mathematics and its Applications. Cambridge
  University Press, 2011.

\bibitem{cuthill-mckee:69}
E.~Cuthill and J.~McKee.
\newblock Reducing the bandwidth of sparse symmetric matrices.
\newblock In {\em Proceedings of the 1969 24th national conference}, ACM '69,
  pages 157--172, New York, NY, USA, 1969. ACM.

\bibitem{evans:76}
J.~R. Evans.
\newblock Maximum flow in probabilistic graphs-the discrete case.
\newblock {\em Networks}, 6(2):161--183, 1976.

\bibitem{flach-poggi:10}
B.~Flach and M.~Poggi.
\newblock On a class of stochastic programs with endogenous uncertainty:
  theory, algorithm and.
\newblock Technical report, Monografias em Ciencia da Computacão No 05/10,
  PUC, Rio, 2010.

\bibitem{FK96}
Michael~L. Fredman and Leonid Khachiyan.
\newblock On the complexity of dualization of monotone disjunctive normal
  forms.
\newblock {\em J. Algorithms}, 21(3):618--628, 1996.

\bibitem{goel-grossmann:04}
V.~Goel and I.~E. Grossmann.
\newblock A stochastic programming approach to planning of offshore gas field
  developments under uncertainty in reserves.
\newblock {\em Computers \& Chemical Engineering}, 28(8):1409--1429, 2004.

\bibitem{goel-grossmann:06}
V.~Goel and I.~E. Grossmann.
\newblock A class of stochastic programs with decision dependent uncertainty.
\newblock {\em Mathematical Programming}, 108(2):355--394, 2006.

\bibitem{gupta-grossmann:14}
V.~Gupta and I.~E. Grossmann.
\newblock A new decomposition algorithm for multistage stochastic programs with
  endogenous uncertainties.
\newblock {\em Computers \& Chemical Engineering}, 62:62--79, 2011.

\bibitem{gupta-grossmann:11}
V.~Gupta and I.~E. Grossmann.
\newblock Solution strategies for multistage stochastic programming with
  endogenous uncertainties.
\newblock {\em Computers \& Chemical Engineering}, 35(11):2235--2247, 2011.

\bibitem{cl-jointgen}
Utz-Uwe Haus.
\newblock {\tt cl-jointgen}, a {C}ommon {L}isp {I}mplementation of the
  {J}oint-{G}eneration {M}ethod.
\newblock available at \url{http://www.primaldual.de/cl-jointgen}, 2008-2013.

\bibitem{haus-klamt-stephen:08}
Utz-Uwe Haus, Steffen Klamt, and Tamon Stephen.
\newblock Computing knockout strategies in metabolic networks.
\newblock {\em Journal of Computational Biology}, 15:259--268, 2008.

\bibitem{haus-michini:14}
Utz-Uwe Haus and Carla Michini.
\newblock Compact representations of all members of an independence system.
\newblock {\em Annals of Mathematics and Artificial Intelligence}, pages 1--18,
  2016.

\bibitem{held-woodruff:05}
H.~Held and D.~L. Woodruff.
\newblock Heuristics for multi-stage interdiction of stochastic networks.
\newblock {\em Journal of Heuristics}, 11(5):483--500, 2005.

\bibitem{janjarassuk-linderoth:08}
Udom Janjarassuk and Jeff Linderoth.
\newblock Reformulation and sampling to solve a stochastic network interdiction
  problem.
\newblock {\em Networks}, 52(3):120--132, 2008.

\bibitem{jonsbraten:98}
T.~W. Jonsbraten.
\newblock {\em Optimization models for petroleum field exploitation}.
\newblock PhD thesis, Stavanger College, Department of Business Administration,
  1998.

\bibitem{kawas-laumanns-pratsini:13}
B.~Kawas, M.~Laumanns, and E.~Pratsini.
\newblock A robust optimization approach to enhancing reliability in production
  planning under non-compliance risk.
\newblock {\em OR Spectrum}, 35(4):835--865, 2013.

\bibitem{lee:59}
C.~Y. Lee.
\newblock Representation of switching circuits by binary-decision programs.
\newblock {\em Bell Systems Technical Journal}, 38(4):985--999, 1959.

\bibitem{masucci-smith-crooks-batty:09}
A.~P. Masucci, D.~Smith, A.~Crooks, and M.~Batty.
\newblock Random planar graphs and the london street network.
\newblock {\em The European Physical Journal B}, 71(2):259--271, 2009.

\bibitem{meinel-theobald:98}
Christoph Meinel and Thorsten Theobald.
\newblock {\em Algorithms and Data Structures in {VLSI} Design: {OBDD} --
  Foundations and Applications}.
\newblock Springer-Verlag New York, Inc., Secaucus, NJ, USA, 1998.

\bibitem{moore-shannon:56a}
E.~F. Moore and C.~E. Shannon.
\newblock Reliable circuits using less reliable relays: Part i.
\newblock {\em Journal of the Franklin Institute}, 262(3):191--208, 1956.

\bibitem{moore-shannon:56b}
E.~F. Moore and C.~E. Shannon.
\newblock Reliable circuits using less reliable relays: {P}art {II}.
\newblock {\em Journal of the Franklin Institute}, 262(4):281--297, 1956.

\bibitem{pan-vardi:05}
Guoqiang Pan and Moshe~Y. Vardi.
\newblock Symbolic techniques in satisfiability solving.
\newblock {\em J. Autom. Reasoning}, 35(1-3):25--50, 2005.

\bibitem{parker-ryan:96}
Mark Parker and Jennifer Ryan.
\newblock Finding the minimum weight {IIS} cover of an infeasible system of
  linear inequalities.
\newblock {\em Annals of Mathematics and Artificial Intelligence},
  17(1):107--126, 1996.

\bibitem{peeta-salman-gunnec-viswanath:10}
Srinivas Peeta, F.~Sibel Salman, Dilek Gunnec, and Kannan Viswanath.
\newblock Pre-disaster investment decisions for strengthening a highway
  network.
\newblock {\em Computers \& Operations Research}, 37(10):1708 -- 1719, 2010.

\bibitem{prestwich-laumanns-kawas:13}
Steven~D. Prestwich, Marco Laumanns, and Ban Kawas.
\newblock Value interchangeability in scenario generation.
\newblock In Christian Schulte, editor, {\em Principles and Practice of
  Constraint Programming}, volume 8124 of {\em Lecture Notes in Computer
  Science}, pages 587--595. Springer Berlin Heidelberg, 2013.

\bibitem{laumanns:14}
Steven~D. Prestwich, Marco Laumanns, and Ban Kawas.
\newblock Distribution shaping and scenario bundling for stochastic programs
  with endogenous uncertainty.
\newblock Stochastic Programming E-Print Series~5, Institut f{\"u}r Mathematik,
  HU Berlin, 2014.

\bibitem{provan-shier:96}
J.~Scott Provan and Douglas~R. Shier.
\newblock A paradigm for listing $(s,t)$-cuts in graphs.
\newblock {\em Algorithmica}, 15(4):351--372, 1996.

\bibitem{read-tarjan:75}
R.C. Read and R.E. Tarjan.
\newblock Bounds on backtrack algorithms for listing cycles, paths, and
  spanning trees.
\newblock {\em Networks}, 5(3):237--252, 1975.

\bibitem{schichl-sellmann:15}
Hermann Schichl and Meinolf Sellmann.
\newblock Predisaster preparation of transportation networks.
\newblock In {\em Proceedings of the Twenty-Ninth {AAAI} Conference on
  Artificial Intelligence, January 25-30, 2015, Austin, Texas, {USA.}}, pages
  709--715, 2015.

\bibitem{sekine-imai-tani:95}
Kyoko Sekine, Hiroshi Imai, and Seiichiro Tani.
\newblock Computing the tutte polynomial of a graph of moderate size.
\newblock In John Staples, Peter Eades, Naoki Katoh, and Alistair Moffat,
  editors, {\em Algorithms and Computations: 6th International Symposium, ISAAC
  '95 Cairns, Australia, December 4--6, 1995 Proceedings}, pages 224--233.
  Springer Berlin Heidelberg, Berlin, Heidelberg, 1995.

\bibitem{somenzi:cudd-2.5.0}
Fabio Somenzi.
\newblock {CUDD}: {CU} decision diagram package release 2.5.0.
\newblock available as Open Source at
  \url{ftp://vlsi.colorado.edu/pub/cudd-2.5.0.tar.gz}, 2012.

\bibitem{takenaga-yajima:00}
Yasuhiko Takenaga and Shuzo Yajima.
\newblock Hardness of identifying the minimum ordered binary decision diagram.
\newblock {\em Discrete Applied Mathematics}, 107(1–3):191 -- 201, 2000.
\newblock {SI} Boolean Functions.

\bibitem{birnbaum-esary-saunders:61}
J.~D.~Esary Z.~W.~Birnbaum and S.~C. Saunders.
\newblock Multi-component systems and structures and their reliability.
\newblock {\em Technometrics}, 3(1):55--77, February 1961.

\end{thebibliography}
\bigskip

\end{document}